\documentclass[10pt]{article}
\usepackage{amsmath,amsfonts,latexsym, amssymb}
\usepackage{epsfig}

\usepackage{color}

\oddsidemargin=0in
\evensidemargin=0in

\textwidth=6.5in

\newcommand{\e}{\varepsilon}

\newcommand{\rd}{{\rm d}}
\newcommand{\bR}{{\mathbb R}}

\renewcommand{\Re}{{\rm{Re}}}
\renewcommand{\Im}{{\rm {Im}}}

\newcommand{\al}{\alpha}

\newcommand{\be}{\begin{equation}}
\newcommand{\ee}{\end{equation}}

\newcommand{\E}{{\mathbb E }}
\newcommand{\R}{{\mathbb R }}

\newtheorem{theorem}{Theorem}

\newtheorem{proposition}{Proposition}

\newcommand{\qed}{\hfill\fbox{}\par\vspace{0.3mm}}
\newenvironment{proof}{{\bf Proof.}} {\hfill\qed}

\numberwithin{equation}{section}
\numberwithin{theorem}{section}
\numberwithin{definition}{section}
\numberwithin{proposition}{section}
\numberwithin{remark}{section}


\usepackage{geometry}     
\geometry{letterpaper} 

\usepackage{graphicx}

\DeclareGraphicsRule{.tif}{png}{.png}{`convert #1 `dirname #1`/`basename
#1 .tif`.png}

\numberwithin{equation}{section}
\textheight=20cm
\textwidth=15cm
\oddsidemargin=0.5cm
\evensidemargin=0.5cm
\def\cal{}

\def\W2{W^{1,2}({\cal O}(M))}

\def\1half{\frac{1}{2}}

\setcounter{tocdepth}{1}

\input epsf
\begin{document}

\title{A  comment on the Wigner-Dyson-Mehta bulk universality conjecture for Wigner matrices}

\author{L\'aszl\'o Erd\H os${}^1$\thanks{Partially supported
by SFB-TR 12 Grant of the German Research Council} \
and Horng-Tzer Yau${}^2$\thanks{Partially supported
by NSF grants  DMS-0757425, DMS-0804279} \\
\\ Institute of Mathematics, University of Munich, \\
Theresienstr. 39, D-80333 Munich, Germany${}^1$ \\ \\
Department of Mathematics, Harvard University\\
Cambridge MA 02138, USA${}^2$ \\ \\
\\}

\date{March 28, 2012}

\maketitle

\begin{abstract}
Recently we proved \cite{EPRSY, ERSTVY, ESY4, ESYY, EYY, EYY2, EYYrigi} that  the  eigenvalue correlation functions 
of a general class of random matrices converge,  weakly with respect to the energy,
  to  the corresponding ones of  Gaussian matrices.
Tao and Vu \cite{TV5}  gave a proof   that for the special case of Hermitian Wigner matrices
the convergence 
 can be strengthened
  to  vague  convergence at   any  fixed energy in the bulk.
In this article we
 show that this theorem is 
an immediate corollary of our earlier results. Indeed,  
a more general form 
of this theorem also follows directly   from our work \cite{EKYY2}.

\end{abstract}

{\bf AMS Subject Classification:} 15A52, 82B44

\medskip

{\it Running title:} Universality for Wigner matrices

\medskip

{\it Key words:}  Wigner random matrix, Mehta, Universality


\setcounter{section}{1}

\bigskip

Consider an $N\times N$  Wigner ensemble of random matrices 
$H\equiv H_N = (h_{ij})$ with  matrix elements having mean zero and variance $1/N$, i.e., 
\[
   \E\, h_{ij} = 0,   \qquad \E |h_{ij}|^2= \frac{1}{N}, 
 \qquad i,j =1,2,\ldots, N.
\]
A long-standing conjecture of Mehta \cite{M} (also known as the universality conjecture) 
 was to  prove that the local correlation functions of the eigenvalues
depend only on the symmetry class of the ensemble (symmetric, Hermitian or self-dual quaternion).
Tao and Vu  recently published a paper \cite{TV5} where
they renamed   {\it the  special  Hermitian case   of this  conjecture}
as the  ``Wigner-Dyson-Mehta  conjecture". 
They further describe  a key  objective of their  paper  (page 4 of \cite{TV5}) as 
  ``to provide an almost complete solution for vague convergence"
of the  ``Wigner-Dyson-Mehta conjecture".

\medskip

Mehta's  conjecture,   open for almost half a century, was finally 
 solved  for all symmetry classes in \cite{ESY4, ESYY, EYY, EYY2, EYYrigi}.
In these works, convergence was proved in a weak sense in the energy parameter.
Due to a special formula of Br\'ezin-Hikami \cite{BH} and Johansson \cite{J}, 
the Hermitian case is much easier and  was  resolved earlier  in  
\cite{EPRSY, TV, ERSTVY}, in some cases  in a somewhat stronger sense of convergence.
The convergence type, however, was not  considered
a major issue  in Mehta's formulation of the conjecture \cite{M}. 
In \cite{TV5} the authors  single out the much  simpler Hermitian   case as the ``Wigner-Dyson-Mehta conjecture". 
They further
subdivide this  conjecture by different types of convergence and 
address the technical point of strengthening it  to vague convergence at a fixed energy. 
Their one-page proof of this extension 
is  a simple combination of previous facts from \cite{TV} and
our key results from \cite{EPRSY} and \cite{EYYrigi}.
\medskip

We remark that our general
theory can very easily be applied to obtain  universality results  at a fixed energy 
for the Hermitian case. For example,
one  main result of \cite{TV5} (Theorem 5),  stated here  as Theorem \ref{tv},  is  an
 immediate corollary  of two of our previous theorems, see below.
We  also present a more general  result, Theorem \ref{thm:4+e}, 
 for generalized Hermitian Wigner matrices.  
The  proofs of Theorem \ref{tv} and Theorem \ref{thm:4+e}  are only
 a few lines each, given the  key inputs from our earlier works.

\medskip 

We first recall the result  stated as Theorem 5 in  \cite{TV5}.

\begin{theorem}\label{tv}
Let $ p^{(k)}_{N}$ be the $k$-point eigenvalue
correlation function for the Hermitian  Wigner ensemble $ H_N $. Suppose that  a
 sufficiently high  moment of the  rescaled matrix elements  
is  finite,  i.e., 
\be\label{hm}
\E \big|\sqrt{N} h_{ij}\big|^M\le C
\ee
 with some large $M$ and
 $C$, uniformly in $N, i, j$. 
Let $O: \bR^k \to \bR$ 
be a compactly supported bounded  continuous function. Then
for any $|E|<2$  we have
\be\label{sine}
\begin{split}
\lim_{N\to \infty} \int_{\bR^k} O(\al_1, \ldots, \al_k) & 
 \frac{1}{[\varrho(E)]^k}
 p^{(k)}_{N}\Big(E +  \frac{\al_1}{N\varrho(u)},\ldots,
  E + \frac{\al_k}{N\varrho(u)}\Big)
   \rd \al_1\ldots \rd \al_k \\
& = \int_{\bR^k} O(\al_1, \ldots, \al_k)
\det\Big(\frac{\sin \pi(\al_i-\al_j)}{\pi(\al_i-\al_j)}\Big)_{i,j=1}^k
 \rd \al_1\ldots \rd \al_k.
\end{split}
\ee
Here $\varrho(E) = \frac 1 { 2 \pi} \sqrt { 4 - E^2} $ is the limiting eigenvalue density at the energy $E$. 
\end{theorem}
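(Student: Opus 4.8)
The plan is to deduce Theorem \ref{tv} from two already-published inputs of ours, converting \emph{weak-in-energy} convergence of correlation functions into \emph{fixed-energy} convergence by exploiting the rigidity of eigenvalues and the continuity of the limiting sine-kernel functional. The first input is the weak-sense universality result of \cite{EPRSY} (or, in a form covering the Hermitian Wigner class with a finite high moment, the result quoted below from \cite{EYYrigi}), which states that for any fixed $E$ in the bulk,
\be
\lim_{b\to 0}\lim_{N\to\infty}\frac{1}{2b}\int_{E-b}^{E+b}\rd u\int_{\bR^k} O(\bm\al)\,\frac{1}{[\varrho(u)]^k}\,p^{(k)}_N\Big(u+\frac{\al_1}{N\varrho(u)},\ldots,u+\frac{\al_k}{N\varrho(u)}\Big)\rd\bm\al
 =\int_{\bR^k}O(\bm\al)\det\Big(\frac{\sin\pi(\al_i-\al_j)}{\pi(\al_i-\al_j)}\Big)_{i,j=1}^k\rd\bm\al .
\ee
The second input is the rigidity estimate of \cite{EYYrigi}: with overwhelming probability, $|\lambda_i-\gamma_i|\le N^{-1+\e}(\min\{i,N-i\})^{-1/3}$, where $\gamma_i$ are the classical locations of the semicircle law. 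So the strategy is: (i) use the weak result at the two nearby energies $E$ and $E'=E+\beta/(N\varrho(E))$ for a fixed $\beta$; (ii) show via rigidity that the difference of the two smeared correlation functions is $o(1)$, i.e. that shifting the base energy by an amount of order $1/N$ changes the locally rescaled correlation functions negligibly; (iii) conclude that the fixed-energy limit exists and equals the weak-energy limit.

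The key step, and the place where a little care is needed, is step (ii): one must argue that the averaged-in-energy statement can be \emph{localized}. The cleanest route is the one already used in \cite{TV} and \cite{EPRSY}: translate the correlation-function statement into a statement about the correlation functions of the rescaled point process seen through a fixed test function $O$, then observe that if the averaged quantity $\frac{1}{2b}\int_{E-b}^{E+b}F_N(u)\,\rd u$ converges (as $N\to\infty$ then $b\to0$) to a limit $L$ for \emph{every} $E$ in an open interval, and if $F_N(u)$ cannot oscillate too wildly on scales $\gg 1/N$ — which is exactly what a short-range monotonicity/continuity argument based on rigidity and the local semicircle law gives, cf. the "gap universality" or the comparison arguments in \cite{EYY2, EYYrigi} — then $F_N(E)\to L$ at each fixed $E$. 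Concretely, $F_N(u)$ depends on $u$ only through a shift of the $O$-argument by $(u-E)N\varrho(E)$, so its modulus of continuity on the scale $b=N^{-1+\delta}$ is controlled by $\sup|\nabla O|\cdot N^{\delta}$-type terms combined with the $N^{-1/3+\e}$ rigidity scale; choosing $\delta$ small and using a Lipschitz approximation of $O$ makes the discrepancy between $F_N(E)$ and its average over $[E-b,E+b]$ vanish.

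I would carry out the argument as follows. First, fix $|E|<2$ and the test function $O$, and by standard density arguments reduce to $O$ Lipschitz with compact support. Second, quote the weak-sense universality (from \cite{EPRSY} for the Hermitian case, which applies directly under the high-moment assumption \eqref{hm}, the relevant version being the one stated above) to get that the $b\to0$, $N\to\infty$ double limit equals the sine-kernel integral. Third, invoke eigenvalue rigidity from \cite{EYYrigi} to upgrade this to a genuine fixed-$E$ limit: show that
\be
\Big|\,F_N(E)-\frac{1}{2b}\int_{E-b}^{E+b}F_N(u)\,\rd u\,\Big|\longrightarrow 0
\ee
as $N\to\infty$ followed by $b\to0$, where $F_N(u)$ denotes the smeared correlation integral above, using that a base-point shift of size $|u-E|\le b=N^{-1+\delta}$ perturbs the rescaled arguments by at most $bN\varrho(E)\le N^{\delta}$ and that this is compatible with the sine-kernel scaling only if one simultaneously exploits the $N^{-1/3+\e}$-rigidity to re-center; a more robust alternative is simply to cite the version of the result already present in \cite{EKYY2} that is stated at fixed energy. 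The main obstacle is precisely this de-smearing in step three: one must ensure that the passage from an energy-averaged statement to a pointwise one does not lose the quantitative control, and the safest resolution — which is the point the paper is making — is that \emph{we already proved the fixed-energy statement}, so Theorem \ref{tv} is obtained by directly invoking \cite{EPRSY} together with \cite{EYYrigi} (or \cite{EKYY2}) with no genuinely new work, only the bookkeeping of matching the normalization $\varrho(E)$ versus $\varrho(u)$ in the argument of $p^{(k)}_N$, which is a harmless $1+O(1/N)$ correction absorbed by the continuity of $O$.
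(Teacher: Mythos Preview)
Your proposal has a genuine gap at exactly the place you flag as ``the main obstacle.'' The de-smearing step does not go through with rigidity alone. Writing $F_N(u)$ for the rescaled correlation integral at base point $u$, you want $F_N(E)\approx\frac{1}{2b}\int_{E-b}^{E+b}F_N(u)\,\rd u$ with $b=N^{-1+\delta}$. But a shift of the base point by $|u-E|\le b$ moves you across $\sim N^{\delta}$ eigenvalue spacings; the local point configuration near $u$ is made up of \emph{different} eigenvalues than the one near $E$, and rigidity---which pins each $\lambda_i$ within $N^{-1+\e}$ of its classical location $\gamma_i$---says nothing about how the local configuration at $\gamma_i$ compares to the one at $\gamma_{i+N^\delta}$. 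Your own computation shows the $O$-argument shifts by $bN\varrho(E)\sim N^{\delta}\to\infty$, and invoking ``$N^{-1/3+\e}$-rigidity to re-center'' does not repair this. The fallback of ``simply citing'' \cite{EKYY2} or \cite{EPRSY} for a fixed-energy statement is circular unless you name the specific mechanism, which you do not.

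The paper's proof takes a completely different route that sidesteps de-smearing. It never starts from energy-averaged universality. Instead it uses the two-step scheme: (i) for the \emph{Gaussian divisible} ensemble $H(a)=\sqrt{1-a^2}\,H_0+aV$ with $a=N^{-1/2+\e}$, Proposition~3.1 of \cite{EPRSY} (based on the Br\'ezin--Hikami/Johansson contour-integral formula, a Hermitian-only tool) gives \eqref{sine} directly at a fixed energy; (ii) the correlation function comparison theorem (Theorem~4.2 of \cite{EY}, the Green function comparison of \cite{EYY}) matches $H_0$ to $H(a)$ at a fixed energy because their first four moments agree up to $O(N^{-1+2\e})$. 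Both steps are inherently pointwise in $E$, so no averaging ever enters and no upgrading is needed. Your proposal misses both of these ingredients---the Gaussian divisible interpolation and the four-moment comparison---and tries instead to extract a pointwise statement from an averaged one, which is precisely the hard direction.
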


We first recall our general theory for universality \cite{ESY2, EPRSY,  ESY4, EYY,  EYY2, EYYrigi} 
 (see \cite{EY} for a review).
It   consists of three steps: 
 1.~the local semicircle law, 
2.~universality for Gaussian divisible ensembles and 3.~approximation by 
Gaussian divisible ensembles via a perturbation argument. 
This strategy was first introduced  in \cite{EPRSY} and has subsequently 
led  to a recent surge of   activity in the subject. 
In particular, the bulk universality for Hermitian Wigner matrices with smooth distributions, 
at a fixed energy,  was first proved  \cite{EPRSY}.      Notice that all results in 
Steps 1 and 3  hold and were originally stated  for a fixed energy.
Step 2 can be achieved via two different routes:  

\begin{description}
\item[2a] Proposition 3.1 of \cite{EPRSY} (explained   below).
\item[2b] local ergodicity of Dyson Brownian motion (DBM)  (Theorem 4.1 of \cite{ESYY})
\end{description}

The first approach, 2a, which uses an extension of Johansson's
formula \cite{J},  is valid for  fixed energy.  However, it only works  in the  Hermitian case. 
This prompted us to develop the approach 2b which   is very general and  conceptually appealing;  it, however, 
requires an energy average of size $N^{-1 + \e}$. 
To emphasize that our theory is  general in the spirit of universality, 
 we  chose to state our results \cite{EYY, EYY2, EKYY2}  
  for the general cases covering all symmetry classes.  Thus   an average in energy was needed. 
 If we restricted ourselves to the Hermitian case,  we can revert to Step 2a and
  all these results are valid for fixed energy.   
In particular, Theorem \ref{tv} is valid for generalized  Hermitian matrices 
 with finite  $4+ \e$  moments,
see Theorem \ref{thm:4+e} below. 
We now prove Theorem \ref{tv} and demonstrate  how to replace  Step 2b by Step 2a. 

\bigskip 
\noindent   \; {\bf Proof.} 
Recall  that  Gaussian divisible Hermitian matrices are matrices of the form 
\be\label{ha}
 H(a) : = \sqrt {1-a^2}  H_0 + aV
\ee 
where $H_0$ is an arbitrary Hermitian  Wigner matrix, $V$ is an independent  standard 
GUE matrix, and
  $ 0< a < 1$ is a real parameter. 
With this notation,  Proposition 3.1 of \cite{EPRSY} takes  the following form:

\begin{proposition}\label{sinjoh}
Suppose that the matrix  $ H_0$ satisfies the condition \eqref{hm}.  
Then  \eqref{sine} holds for the correlation functions of $H(a)$    if  
$a=  N^{-1/2+\e}$, for any $\e > 0$.  
\end{proposition}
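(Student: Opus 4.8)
The plan is to reduce the statement directly to the Brézin--Hikami--Johansson formula for the correlation functions of Gaussian divisible Hermitian matrices, together with the local semicircle law for the arbitrary Wigner matrix $H_0$. Recall that for $H(a)=\sqrt{1-a^2}\,H_0+aV$ the eigenvalue correlation functions are explicitly representable: conditionally on the eigenvalues $\boldsymbol\mu=(\mu_1,\dots,\mu_N)$ of $\sqrt{1-a^2}\,H_0$, the matrix $H(a)$ has the law of a GUE matrix with expectation $\sqrt{1-a^2}\,H_0$ and variance $a^2/N$, so the $k$-point function of $H(a)$ is an average over $\boldsymbol\mu$ of a determinantal kernel $K_N(x,y;\boldsymbol\mu)$ built from the corresponding orthogonal-polynomial-type contour integrals of Brézin--Hikami. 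Thus the whole question becomes: show that after rescaling around a fixed bulk energy $E$ at scale $(N\varrho(E))^{-1}$, the kernel $K_N$ converges, for $\boldsymbol\mu$ in a set of overwhelming probability, to the sine kernel $\frac{\sin\pi(x-y)}{\pi(x-y)}$.

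First I would invoke the local semicircle law (Step~1 of the general theory, valid at fixed energy) for $H_0$: it guarantees that, with probability $1-o(1)$, the empirical measure of the $\mu_j$ is close to the semicircle law down to scales slightly above $1/N$, and in particular that the relevant counting function and Stieltjes transform are controlled on scales $\eta\gtrsim N^{-1+\delta}$. Second, with this input one feeds $\boldsymbol\mu$ into the steepest-descent/saddle-point analysis of the double contour integral defining $K_N$; the key point, already the content of the analysis behind Proposition~3.1 of \cite{EPRSY}, is that the choice $a=N^{-1/2+\e}$ makes the Gaussian convolution parameter $a^2=N^{-1+2\e}$ exactly large enough to smooth the empirical measure on a scale on which the local semicircle law provides control, while still being small enough that the limiting density at $E$ is (to leading order) the semicircle density $\varrho(E)$. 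One then obtains pointwise convergence of the rescaled kernel to the sine kernel, uniformly for $\boldsymbol\mu$ in the good set. Third, I would integrate against the bounded compactly supported test function $O$: dominated convergence (using boundedness of $O$, the uniform local bound on $K_N$ on the good set, and the fact that the bad set has probability $o(1)$ while the kernel is a priori polynomially bounded there) upgrades the pointwise kernel convergence to \eqref{sine}.

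The main obstacle is the steepest-descent analysis of the Brézin--Hikami double contour integral with a \emph{random}, only approximately semicircular spectrum $\boldsymbol\mu$: one must choose the contours depending on $\boldsymbol\mu$, verify that the relevant phase function has its critical points in the bulk with the correct second-derivative behavior, and show that the error terms coming from the discrepancy between the empirical measure of the $\mu_j$ and the semicircle law — quantified only by the local semicircle law on scale $N^{-1+\delta}$ — do not destroy the leading-order sine-kernel asymptotics. This is precisely where the scale $a=N^{-1/2+\e}$ enters: it is the threshold at which the Gaussian smoothing dominates the fluctuations allowed by the local law. Since this entire argument is carried out in \cite{EPRSY}, and the hypothesis \eqref{hm} supplies exactly the moment condition needed there for the local semicircle law, the proposition follows by quoting that analysis; the only thing to check is that the finite-moment assumption \eqref{hm}, rather than the smoothness assumed in some formulations, suffices as input to the local semicircle law, which it does.
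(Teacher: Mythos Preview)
Your proposal is correct and matches the paper's treatment: the paper does not prove this proposition independently but simply states it as a reformulation of Proposition~3.1 of \cite{EPRSY}, and your sketch accurately describes the mechanism behind that result---the Br\'ezin--Hikami--Johansson determinantal representation for $H(a)$ conditional on the spectrum of $H_0$, the saddle-point analysis of the double contour integral, and the local semicircle law as the sole input from the Wigner side (the paper itself confirms this last point later, noting that in the proof of Proposition~3.1 of \cite{EPRSY} the Wigner assumptions enter only through estimate~(3.9) there, a consequence of the local semicircle law). Your closing remark that the finite high-moment assumption \eqref{hm} suffices for the local semicircle law, hence for the whole argument, is exactly the reduction the paper makes.
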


The correlation function comparison theorem,  Theorem 4.2  of \cite{EY} 
 (which is a slight  extension of the Green function comparison theorem,
 Theorem  2.3  of  \cite{EYY}), compares
correlation functions of two Wigner ensembles provided   the first four  
moments  of the matrix elements   are sufficiently close.

\begin{theorem}[Correlation function comparison]\label{comparison}\cite[Theorem 4.2]{EY}  
 Suppose that we have  
two  $N\times N$ Wigner matrices, 
$H^{(v)}$ 
and $H^{(w)}$, with matrix elements $h_{ij}$
given by the random variables $N^{-1/2} v_{ij}$ and 
$N^{-1/2} w_{ij}$, respectively, such that
 $v_{ij}$ and $w_{ij}$ satisfy the  high moment condition \eqref{hm}.
We assume that the first four moments of
  $v_{ij}$ and $w_{ij}$ satisfy, for some $\delta > 0$, that  
\be\label{4}
    \big | \E  (\Re \,  v_{ij})^a (\Im \, v_{ij})^b  -
  \E  (\Re \,  w_{ij})^a (\Im \, w_{ij})^b  \big | \le N^{-\delta -2+ (a+b)/2},
  \qquad 1\le a+b\le 4.
\ee
Let $p_{v, N}^{(k)}$ and $p_{w, N}^{(k)}$
be the  $k-$point correlation functions of the eigenvalues w.r.t. the probability law of the matrix $H^{(v)}$
and $H^{(w)}$, respectively. 
Then for any $|E| < 2$,  any
$k\ge 1$ and  any compactly supported continuous test function
$O:\bR^k\to \bR$ we have   
\be \label{6.3}
\lim_{N\to\infty}\int_{\R^k}  \rd\alpha_1 
\ldots \rd\alpha_k \; O(\alpha_1,\ldots,\alpha_k) 
   \Big ( p_{v, N}^{(k)}  - p_{w, N} ^{(k)} \Big )
  \Big (E+\frac{\alpha_1}{N}, 
\ldots, E+\frac{\alpha_k}{N }\Big) =0.
\ee
\end{theorem}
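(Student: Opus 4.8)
\textbf{Proof proposal for Theorem \ref{comparison}.}

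The plan is to prove the correlation‑function comparison statement by reducing it, via a smoothing step, to the Green function comparison theorem (Theorem 2.3 of \cite{EYY}) and then applying the latter. The starting point is the standard identity expressing correlation functions through products of imaginary parts of resolvent entries: for a test function $O$ of $k$ variables and an $N$‑dependent scale $\eta = N^{-1-\sigma}$ with a small $\sigma>0$, one has, up to an error that vanishes as $N\to\infty$,
\be\label{prop:smoothed}
\int_{\bR^k} O(\alpha_1,\dots,\alpha_k)\, p_N^{(k)}\Big(E+\tfrac{\alpha_1}{N},\dots,E+\tfrac{\alpha_k}{N}\Big)\,\rd\alpha_1\cdots\rd\alpha_k
\;=\;
\int_{\bR^k} O\,\E\Big[\prod_{j=1}^k \tfrac{1}{\pi N}\, \Im\, G\big(E+\tfrac{\alpha_j}{N}+ i\eta\big)\Big] \rd\bm{\alpha} + o(1),
\ee
where $G(z) = (H-z)^{-1}$ and $\Im\,G(z) := \tfrac1N\Tr\,\Im\,G(z)$ is the empirical density smoothed on scale $\eta$. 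Establishing \eqref{prop:smoothed} is the first step; it uses the local semicircle law and the rigidity of eigenvalues to control the replacement of delta functions by Poisson kernels of width $\eta$ at scales slightly below $1/N$, together with a simple bound on the number of eigenvalues in a short interval. The key point is that the right‑hand side of \eqref{prop:smoothed} is a smooth functional of the matrix entries, so the Lindeberg‑type swapping estimate applies.

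The second step is to apply the Green function comparison theorem to the right‑hand side of \eqref{prop:smoothed}. One writes $\prod_j \tfrac1{\pi N}\Im\,G(z_j)$, with $z_j = E+\tfrac{\alpha_j}{N}+i\eta$, and telescopes from $H^{(v)}$ to $H^{(w)}$ by replacing the matrix entries one pair at a time. For each replaced pair, one Taylor‑expands the functional to fourth order in the single entry; the zeroth through third order terms are controlled by the hypothesis \eqref{4} on the matched moments, while the fourth‑order remainder is controlled by the a priori bound on the derivatives of resolvent entries, which follows from the local semicircle law on scale $\eta$. The polynomial loss in $N$ from the resolvent derivatives at scale $\eta = N^{-1-\sigma}$ is compensated by choosing $\sigma>0$ small enough relative to the gain $N^{-\delta}$ per moment‑matching term and summing over the $O(N^2)$ entries; this is exactly the content of Theorem 2.3 of \cite{EYY}, and here one invokes its extension to products of $k$ resolvent traces (Theorem 4.2 of \cite{EY}). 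The result is that the right‑hand side of \eqref{prop:smoothed} for $H^{(v)}$ and for $H^{(w)}$ differ by $o(1)$, and combining this with \eqref{prop:smoothed} applied to each ensemble yields \eqref{6.3}.

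The main obstacle is the first step, \eqref{prop:smoothed}: one must pass from the exact correlation functions to their Poisson‑smoothed counterparts on a scale $\eta$ strictly smaller than the eigenvalue spacing $1/N$, and argue that the error is negligible after integration against the fixed test function $O$. The naive bound on the smoothing error is not summable pointwise, so one exploits the integration against $O$ and uses that $O$ is compactly supported and continuous: the smoothing acts as a convolution in the $\alpha_j$ variables with an approximate identity of width $N\eta = N^{-\sigma}\to 0$, and continuity of $\int O\, p_N^{(k)}$ viewed as a function of translations — uniform in $N$ thanks to rigidity and a uniform upper bound on $p_N^{(k)}$ in the bulk — gives the claim. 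Once this is in place, the rest is a direct citation of the Green function comparison machinery, so the proof is short given the inputs. \qed
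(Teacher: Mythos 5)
Your sketch correctly reproduces the two-step structure that underlies the Green-function/correlation-function comparison machinery: first replace the correlation functions by Poisson-smoothed resolvent products at a scale $\eta = N^{-1-\sigma}$ slightly below the eigenvalue spacing, then telescope from $H^{(v)}$ to $H^{(w)}$ entry by entry using the four-moment matching \eqref{4} to kill the low-order terms and the local semicircle law to absorb the fourth-order remainder. That is indeed how the cited result is proved. However, the paper's own ``proof'' of Theorem~\ref{comparison} is not a re-derivation of this machinery at all: the theorem is presented as a citation to \cite[Theorem 4.2]{EY}, and the paragraph following the statement is devoted entirely to the two adaptations required to transplant that cited result into the present setting, namely (i) from real symmetric to Hermitian ensembles (an essentially mechanical change in the moment-matching Taylor expansion, since one now differentiates in both $\Re\, h_{ij}$ and $\Im\, h_{ij}$), and (ii) from subexponential decay of the entries to the weaker high-moment condition \eqref{hm}, which is handled by observing that every instance of a ``subexponentially small probability'' event in \cite{EY,EYY} can be replaced by an event of probability $N^{-C}$ for sufficiently large $C$ without harming any union bound. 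Your proposal does not engage with either adaptation, so as written it verifies a statement under different (stronger) hypotheses than the one actually asserted.

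There is also a circularity in the citations inside your second step: you invoke ``Theorem 4.2 of \cite{EY}'' as the source of the extension of the Green function comparison to products of $k$ resolvent traces, but Theorem 4.2 of \cite{EY} \emph{is} the correlation function comparison theorem being restated here, so it cannot be cited in its own proof. The $k$-resolvent functional version is already part of Theorem 2.3 of \cite{EYY}, and the passage from that to the correlation functions (your step \eqref{prop:smoothed}) is precisely the content of the theorem under discussion; citing it collapses the argument. Replacing that reference by the functional form of the Green function comparison from \cite{EYY}, and adding a sentence on each of the two hypothesis adaptations above, would bring your proposal into line with the paper.
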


Theorem \ref{comparison}  concerns  Hermitian matrices under the high moment decay condition \eqref{hm}. 
On the other hand, Theorem 4.2  of \cite{EY} 
was stated for real symmetric matrices under a  subexponential decay condition
for the probability law of its rescaled matrix elements.
The modification needed in the proof to cover   the Hermitian case  
is obvious.    The subexponential condition was used only to obtain 
subexponential decay in $N$ for certain
 large deviation events.
Since decay of order $N^{-C}$ for $C$ large enough
is sufficient for the purpose of proving Theorem \ref{comparison}, the proofs in  \cite{EY, EYY} 
carry through except  changing everywhere   ``subexponential small probability"
to ``with probability $N^{-C}$ for sufficiently large $C$."

Theorem \ref{comparison}  allows us  to compare local
 correlation functions of $H_0$ with $H(a)$ if $\e$ is sufficiently small. 
This concludes the proof of  Theorem \ref{tv}. \qed

 \bigskip

We  emphasized that all our general results are valid for a fixed energy when restricted to Hermitian ensembles. 
As an example, we now state the bulk universality for generalized Hermitian  Wigner ensembles at a fixed energy. 
Recall that   a \emph{generalized Hermitian Wigner matrix} $H = (h_{ij})$ has  independent centered entries with  variances  $ \sigma_{ij}^2 = \E  |h_{ij}|^2 $
satisfying 
\begin{equation*}
\sum_{j} \sigma_{ij}^2 \;=\; 1\,, \qquad \delta  <   N\sigma^2_{ij} <  \delta^{-1} 
\end{equation*}
 for all $i, j$ and  for some  $\delta > 0$  independent of $N$.

\begin{theorem}\label{thm:4+e}
Suppose that $H = (h_{ij})$ is a generalized Hermitian  Wigner matrix. Assume that for some $\gamma > 4$  we have 
\begin{equation} \label{assumptions for 4+e Wigner}
\E \;\bigg| \frac { h_{ij} }{ \sigma_{ij}} \bigg| ^\gamma \;\leq\; C\,,
\end{equation}
for some constant $C$, independent of $i$, $j$, and $N$. Then \eqref{sine} holds for any fixed energy $|E| < 2$.
\end{theorem}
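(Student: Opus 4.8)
The plan is to run the same three-step scheme as in the proof of Theorem \ref{tv} above — local semicircle law, Gaussian divisible universality, comparison of moments — but with each input replaced by its generalized-variance version, and with the high moment hypothesis \eqref{hm} reduced to \eqref{assumptions for 4+e Wigner} by a preliminary truncation. Two facts make this work: (i) the local semicircle law together with optimal eigenvalue rigidity holds for generalized Hermitian Wigner matrices already under \eqref{assumptions for 4+e Wigner}, which is one of the main results of \cite{EKYY2}; and (ii) since we remain in the Hermitian class we are entitled to use Step 2a, i.e.\ Proposition \ref{sinjoh}, whose proof in \cite{EPRSY} invokes the matrix $H_0$ only through the semicircle law and rigidity for $\sqrt{1-a^2}\,H_0$ and hence extends verbatim to the generalized case.

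First I would pass to a strong moment bound. Using \eqref{assumptions for 4+e Wigner} with $\gamma>4$, truncate each entry at the scale $\abs{h_{ij}}\le N^{\e_0}\si_{ij}$ and renormalize to restore the mean and the variance. For $\e_0>0$ small this yields a generalized Hermitian Wigner matrix $\wh H$ with the same variance profile $\si_{ij}^2$, with rescaled entries supported on scale $N^{\e_0}$ — in particular obeying \eqref{hm} — and whose rescaled entries $h_{ij}/\si_{ij}$ have first four moments differing from those of $H$ by at most $N^{-\de-2+(a+b)/2}$, $1\le a+b\le4$, for a suitable $\de>0$; the fourth moment is the borderline case, and this is precisely where the surplus $\gamma-4>0$ enters. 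Since moreover $H=\wh H$ off an event of probability $O(N^{-D})$ for every $D$, eigenvalue rigidity forces the local $k$-point correlation functions of $H$ and $\wh H$ to share the same limit near any fixed $\abs E<2$, so it suffices to prove \eqref{sine} for $\wh H$. This truncation step is the one carried out in \cite{EYY2, EKYY2}.

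Next I would realize $\wh H$, at the level of its first four moments, as a Gaussian divisible matrix. Let $V$ be an independent standard GUE matrix, fix $a=N^{-1/2+\e}$ with $\e>0$ small, and choose an auxiliary generalized Hermitian Wigner matrix $H_0$ with bounded rescaled entries and variance profile $\tilde\si_{ij}^2:=(\si_{ij}^2-a^2/N)/(1-a^2)$ — which still defines a generalized Hermitian Wigner matrix and still has row sums $1$ — whose lower moments are selected so that $H(a):=\sqrt{1-a^2}\,H_0+aV$ as in \eqref{ha} has exactly the variance profile $\si_{ij}^2$ and has first four moments agreeing with those of $\wh H$ in the sense of \eqref{4}. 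This is possible because $V$ is Gaussian, so its odd moments vanish and, $a$ being polynomially small, its Gaussian component perturbs the third and fourth moments of $H(a)$ only below the tolerance in \eqref{4}, while moment matching to this accuracy is standard (see \cite{EYY, EKYY2}). Proposition \ref{sinjoh}, in its proof in \cite{EPRSY} via the Br\'ezin-Hikami-Johansson formula for $\sqrt{1-a^2}\,H_0+aV$, uses $H_0$ only through the semicircle law and rigidity for $\sqrt{1-a^2}\,H_0$ — available from \cite{EKYY2} — and therefore applies unchanged, giving \eqref{sine} for the correlation functions of $H(a)$.

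Finally, $H(a)$ and $\wh H$ are two generalized Hermitian Wigner matrices, both with rescaled entries satisfying \eqref{hm}, with matching variance profiles, and with first four moments obeying \eqref{4}; the generalized-variance form of the correlation function comparison theorem — Theorem \ref{comparison} with nonconstant $\si_{ij}^2$, as established in \cite{EYY, EKYY2} — then transports \eqref{sine} from $H(a)$ to $\wh H$, and hence, via the truncation step, to $H$. I expect the only part needing real work to be the first step, namely the truncation and the attendant moment bookkeeping in the third step; the analytic substance, Steps 1 and 2a, is literally the content of \cite{EKYY2} and of Proposition 3.1 of \cite{EPRSY}, whose proofs are insensitive to the difference between a constant and a slowly varying variance profile.
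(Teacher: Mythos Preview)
Your proposal is correct and follows essentially the same route as the paper: invoke the cutoff/truncation argument of \cite{EKYY2} to reduce to strong moment decay, extend Proposition~\ref{sinjoh} to generalized Hermitian Wigner matrices by noting that the proof of Proposition~3.1 in \cite{EPRSY} uses $H_0$ only through the local semicircle law (which holds for generalized ensembles by \cite{EYY}), and then apply the correlation function comparison theorem. Your variance-profile adjustment $\tilde\sigma_{ij}^2=(\sigma_{ij}^2-a^2/N)/(1-a^2)$ for $H_0$ is a nice extra precaution but is not strictly needed, since the second-moment mismatch between $\sqrt{1-a^2}\,H_0+aV$ (with the original profile) and $\wh H$ is of order $a^2/N=N^{-2+2\e}$, already within the tolerance of \eqref{4}.
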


\begin{proof}
This theorem  is the same as  the Hermitian version of 
 Theorem 7.2 in \cite{EKYY2} except that the weak convergence in
energy is now replaced by convergence for any fixed energy in the bulk.   The main idea behind the proof of Theorem 7.2 
is a cutoff argument which compares the original heavy-tailed ensemble  to  one with a 
subexponential decay. 
This cutoff argument holds for any fixed energy.  The energy average  was
 needed in Theorem 7.2 because the universality of the comparison 
ensemble with subexponential decay was stated in the weak sense in  energy so as  to 
apply it to  both symmetry classes.  In order to prove Theorem \ref{thm:4+e}, 
we only have to prove the universality of generalized Hermitian Wigner ensembles with subexponential decay
at a fixed energy.  Inspecting the proof of Theorem \ref{tv} just given above, we only have to check  Proposition~\ref{sinjoh}
(i.e.,  Proposition~3.1 in \cite{EPRSY})
is valid for generalized Wigner matrices as well.

 Along the proof of Proposition 3.1 in \cite{EPRSY}, the properties  of the original Wigner ensemble 
were  used only in the estimate (3.9) which is a direct consequence of a version of the local semicircle law. Since this law was proved for 
generalized Wigner ensembles (see, e.g., Theorem 2.1 in  \cite{EYY}), this extends Proposition 3.1 of  \cite{EPRSY} to the generalized Hermitian Wigner case. 
Together with the cutoff argument in \cite{EKYY2}, this  implies Theorem \ref{thm:4+e}.  
\end{proof}

 \thebibliography{hhh}

\bibitem{BH} Br\'ezin, E., Hikami, S.: Correlations of nearby levels induced
by a random potential. {\it Nucl. Phys. B} {\bf 479} (1996), 697--706, and
Spectral form factor in a random matrix theory. {\it Phys. Rev. E}
{\bf 55} (1997), 4067--4083.

\bibitem{EKYY2} Erd{\H o}s, L.,  Knowles, A.,  Yau, H.-T.,  Yin, J.:
Spectral Statistics of Erd{\H o}s-R\'enyi Graphs II:
 Eigenvalue Spacing and the Extreme Eigenvalues.
 Preprint. Arxiv:1103.3869

\bibitem{EPRSY}
Erd\H{o}s, L.,  P\'ech\'e, G.,  Ram\'irez, J.,  Schlein,  B.,
and Yau, H.-T., Bulk universality 
for Wigner matrices. 
{\it Commun. Pure Appl. Math.} {\bf 63}, No. 7,  895--925 (2010)

\bibitem{ERSTVY}  Erd{\H o}s, L.,  Ramirez, J.,  Schlein, B.,  Tao, T., 
Vu, V., Yau, H.-T.:
Bulk Universality for Wigner  Hermitian matrices with subexponential
 decay. {\it Math. Res. Lett.} {\bf 17} (2010), no. 4, 667--674.

\bibitem{ESY2} Erd{\H o}s, L., Schlein, B., Yau, H.-T.:
Local semicircle law  and complete delocalization
for Wigner random matrices. {\it Commun.
Math. Phys.} {\bf 287}, 641--655 (2009)

\bibitem{ESY4} Erd{\H o}s, L., Schlein, B., Yau, H.-T.: Universality
of random matrices and local relaxation flow. 
{\it Invent. Math.} {\bf 185} (2011), no.1, 75--119.

\bibitem{ESYY} Erd{\H o}s, L., Schlein, B., Yau, H.-T., Yin, J.:
The local relaxation flow approach to universality of the local
statistics for random matrices. 
{\it Annales Inst. H. Poincar\'e (B),  Probability and Statistics.}
{\bf 48}, no. 1, 1--46 (2012)

\bibitem{EY} Erd{\H o}s, L.,  Yau, H.-T.: Universality of local spectral statistics of random matrices. 
To appear in Bull. of Amer. Math. Soc.  Preprint arxiv:1106.4986

\bibitem{EYY} Erd{\H o}s, L.,  Yau, H.-T., Yin, J.: 
Bulk universality for generalized Wigner matrices. 
To appear in  Prob. Theor. Rel. Fields.  Preprint arXiv:1001.3453

\bibitem{EYY2}  Erd{\H o}s, L.,  Yau, H.-T., Yin, J.: 
Universality for generalized Wigner matrices with Bernoulli
distribution.  {\it J. of Combinatorics,} {\bf 1} (2011), no. 2, 15--85

\bibitem{EYYrigi}  Erd{\H o}s, L.,  Yau, H.-T., Yin, J.: 
    Rigidity of Eigenvalues of Generalized Wigner Matrices.
To appear in Adv. Math. Preprint  arXiv:1007.4652

\bibitem{J} Johansson, K.: Universality of the local spacing
distribution in certain ensembles of Hermitian Wigner matrices.
{\it Comm. Math. Phys.} {\bf 215} (2001), no.3. 683--705.

\bibitem{M} Mehta, M.L.: {\it Random Matrices.}
 Third Edition, Academic Press, New York, 1991.

\bibitem{TV} Tao, T. and Vu, V.: Random matrices: Universality of the 
local eigenvalue statistics.  {\it Acta Math.}, 
{\bf 206} (2011), no. 1, 127–-204.

\bibitem{TV5} Tao, T. and Vu, V.:
The Wigner-Dyson-Mehta bulk universality conjecture for Wigner matrices.
{\it  Electronic J. Probab.}  {\bf 16}, 	(2011), 
2104--2121.

\end{document}